\documentclass[10pt]{article}
\usepackage[square,authoryear]{natbib}
\usepackage{marsden_article}
\usepackage{framed}

\begin{document}

\title{Symmetric Discrete Optimal Control and Deep Learning}

\author{Anthony M. Bloch\thanks{
Research partially supported by NSF grant  DMS-2103026, and AFOSR grants FA
9550-22-1-0215 and FA 9550-23-1-0400.}
            \\Department of Mathematics
            \\ University of Michigan \\ Ann Arbor MI 48109
            \\{\small abloch@math.lsa.umich.edu}
          \and
Peter E. Crouch
            \\ College of Engineering
             \\ University of Texas at Arlington 
            \\Arlington, TX 
            \\{\small peter.crouch@uta.edu}
             \and
Tudor S. Ratiu\thanks{Research partially supported 
by the National Natural Science Foundation of China  grant
number 11871334 and by the Swiss National Science Foundation grant 
NCCR SwissMAP. }
\\ School of Mathematical Sciences
\\ Ministry of Education Laboratory of Scientific Computing (MOE-LSC)
\\ Shanghai Frontier Science Center of Modern Analysis
\\ Shanghai Jiao Tong University
\\ 800 Dongchuan Road, 200240 China \\  
Section de Math\'ematiques 
\\ Ecole Polytechnique F\'ed\'erale de 
Lausanne \\ 1500 Lausanne, Switzerland
\\{\small ratiu@sjtu.edu.cn, tudor.ratiu@epfl.ch}
}

            \date{\small February 8,  2024\\}
            \maketitle

\centerline{\it In memory of Roger Brockett}

\begin{abstract}
We analyze discrete optimal control problems and their connection 
with back propagation and deep learning.  We consider in particular 
 the symmetric representation of the discrete rigid body equations 
developed via optimal control  analysis and optimal flows on adjoint orbits 

\end{abstract}
\bigskip

\noindent {\bf Keywords:} optimal control, discrete
dynamics,  deep learning, back propagation


\section{Introduction}\label{Intro}

This paper presents some connections between discrete optimal control, 
deep learning, and back propagation.  This goes back  to the work 
of \cite{BrHo1969} on discrete optimal control and, in particular,  
to the paper \cite{LeCun1988}. We show that  the formalism developed in 
\cite{BlCrMaRa2002} has much in common with this latter paper. 
Other interesting recent work on deep learning and optimal control includes  \cite{BeCeEhOwSc2019}, \cite{E2017}, \cite{E2019} and references therein.

We  consider here firstly the discrete setting and discuss also  the 
smooth setting and double bracket equations. 
We consider in particular the discrete symmetric rigid body equations 
developed in \cite{BlCrMaRa2002}. 

The key differences between this work and our earlier work 
\cite{BlCrMaRa2002} is, as  appropriate for machine learning,
a terminal cost rather than a fixed endpoint condition and 
multiple sets of initial data.  The connections
between deep learning and optimal control  are essentially
the following: the controls are the weights, the layers are 
the discrete time points, the training data or patterns
are the initial data, back propagation is solving the 
two point boundary problem, and the endpoint cost is the primary cost.

This is related to our earlier work on an alternative formulation 
of the $n$-dimensional rigid body equations and 
an associated set of discrete equations called the {\it symmetric 
representation of the discrete rigid body equations}; see 
\cite{BlCrMaRa2002} and  \cite{GuBl2004}. Both the continuous equations and their discrete counterpart evolve on a Cartesian product 
$G \times G $ of a Lie group $G$ rather than on its cotangent 
bundle $T^\ast G$. One interesting way to derive the continuous 
equations is by means of the (Pontryagin) maximum principle of 
optimal control theory. Likewise, the discrete equations can be 
derived from discrete optimal control theory. Extensions of the 
theory in the smooth setting may be found, for example, in 
\cite{BlCrMaSa2008}, \cite{GaRa2011}, and \cite{BlGaRa2018}. 

It is also interesting to consider the subRiemannian setting 
of these problems as we did with a view to the quantum 
setting in \cite{SaMoBl2009}.

The work described in this paper is in part expository and in part new. 
It owes much to earlier work by and with Roger Brockett, see, e.g. 
\cite{Brockett1973}, \cite{Brockett1989}, \cite{Brockett1994}, 
\cite{BlBrRa1990}, \cite{BlBrRa1992}, \cite{BlBrCr1997}. 

\section{Discrete Optimal Control and Associated  Deep Learning
}\label{discreteoptimal.section}

We first consider a general  class of discrete optimal
control problems and we follow with the special case of the discrete symmetric representation of the rigid body equations

\noindent
\begin{definition}\label{prob8.1}
Let $N$ be a positive integer and $X_0,X_N\in\mathbb{R}^n$ be given. 
 Let $f:\mathbb{R}^n\times\mathbb{R}^m \rightarrow  
\mathbb{R}^n$ and $g:\mathbb{R}^n\times\mathbb{R}^m \rightarrow
\mathbb{R}^+:=[0,\infty)$ be smooth functions. Denote points 
in $\mathbb{R}^n$ by $x$, points in $\mathbb{R}^m$ by $u$. 
Let $E\subset\mathbb{R}^m$ be a control constraint set and
assume that $E = h^{-1}(0)$, where 
$h:\mathbb{R}^m\to\mathbb{R}^l$ is a submersion.
Let $\langle\cdot,\cdot\rangle$ denote the pairing between vectors in
$\mathbb{R}^p$ given by the dot product; $p$ can be $n$, $m$, or $l$.

Define the optimal control problem:

\begin{equation}\label{eq8.1}
\min_{u_k\in E}\sum^{N-1}_{k=0}g(x_k,u_k)
\end{equation}

subject to
$x_{k+1}=f(x_k,u_k)$, with $x_0=X_0$ and $ x_N=X_N$,
for $u_k\in E$.
\end{definition}

\begin{proposition}\label{gendiscprop}
A solution to problem (\ref{prob8.1}) satisfies the following extremal
solution equations
\begin{equation} \label{eq8.3}
p_k = \frac{\partial H}{\partial x_k}(p_{k+1},x_k,u^*_k), \qquad
x_{k+1} = \frac{\partial H}{\partial p_{k+1}}(p_{k+1},x_k,u^*_k),
\end{equation}
  where
\begin{align} \label{eq8.4}
H(p_{k+1},x_k,u_k):=\langle p_{k+1},f(x_k,u_k)\rangle-g(x_k,u_k),
\quad 
 k=0, \ldots, N, \quad p_{N+1} =0,
\end{align}
and $\partial/ \partial x$, $\partial/ \partial p$ denote the partial 
$x$- and $p$-gradients.
In these equations, $u_k ^\ast$ is determined as follows. Define
\begin{equation}\label{eq8.5}
\hat{H}(p_{k+1},x_k,u_k,\sigma):=H(p_{k+1},x_k,u_k)+ 
\left\langle \sigma,h(u_k)\right\rangle
\end{equation}
for $\sigma\in\mathbb{R}^l$,
$\sigma$ a Lagrange multiplier;
then $u^*_k$ and $\sigma$ are solutions of the equations
\begin{equation}\label{eq8.6}
\frac{\partial\hat{H}}{\partial u_k}(p_{k+1},x_k,u^*_k,\sigma)=0,\quad
h(u^*_k)=0.
\end{equation}
\end{proposition}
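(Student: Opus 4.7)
The plan is to treat this as a standard constrained optimization and apply the Lagrange multiplier rule, with the adjoint variables $p_{k+1}$ playing the role of multipliers for the dynamics constraints $x_{k+1} - f(x_k, u_k) = 0$ and $\sigma_k$ for the control constraints $h(u_k) = 0$. Since $h$ is a submersion, the admissible set $E$ is a smooth submanifold of $\mathbb{R}^m$ of codimension $l$, so the Lagrange multiplier rule applies without degeneracy.

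Specifically, I would form the augmented cost
\begin{equation*}
J = \sum_{k=0}^{N-1}\bigl[g(x_k,u_k) + \langle p_{k+1}, x_{k+1} - f(x_k,u_k)\rangle + \langle \sigma_k, h(u_k)\rangle\bigr]
\end{equation*}
and rewrite it, using the definition of $\hat{H}$, as
\begin{equation*}
J = \sum_{k=0}^{N-1}\langle p_{k+1}, x_{k+1}\rangle - \sum_{k=0}^{N-1}\hat{H}(p_{k+1}, x_k, u_k, \sigma_k).
\end{equation*}
A minimizer is a critical point of $J$ with respect to free variations in the independent variables $p_{k+1}$, $x_k$ for $k = 1, \ldots, N-1$ (the endpoints $x_0 = X_0$ and $x_N = X_N$ being fixed contribute no variation), $u_k$, and $\sigma_k$.

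Setting $\delta J/\delta p_{k+1} = 0$ immediately gives $x_{k+1} = \partial\hat{H}/\partial p_{k+1} = \partial H/\partial p_{k+1}$, the second equation of (\ref{eq8.3}). Setting $\delta J/\delta x_k = 0$ for $1 \le k \le N-1$ combines the term $\langle p_k, x_k\rangle$ coming from index $k-1$ in the first sum with $-\partial\hat{H}/\partial x_k$ from index $k$ in the second sum; this yields $p_k = \partial H/\partial x_k$, the first equation of (\ref{eq8.3}), since the $\sigma$-term in $\hat{H}$ is independent of $x$. Variations in $u_k$ and $\sigma_k$ give exactly the two relations in (\ref{eq8.6}). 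The convention $p_{N+1} = 0$ plays no role when $x_N$ is fixed; it is the natural transversality condition that would appear were the endpoint free with no terminal cost, and is included here so that the formalism extends unchanged to the deep-learning setting discussed later.

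No subtle analysis is required; the only real bookkeeping step is noticing, in the $\delta x_k$ variation, that $x_k$ appears in two successive terms of the sum once $J$ is written as above, which is precisely the reason the adjoint equation runs as a backward recursion. Verifying that the Lagrange multiplier $\sigma_k$ can indeed be chosen pointwise in $k$ (rather than globally) is immediate because the constraints $h(u_k) = 0$ decouple across time steps.
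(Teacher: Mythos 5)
Your proposal is correct and follows essentially the same route as the paper: form the augmented functional with multipliers $p_{k+1}$ for the dynamics and $\sigma$ for the constraint $h(u_k)=0$, rewrite it in terms of $\hat{H}$, shift the index (discrete summation by parts) so that $x_k$ appears in adjacent terms, and set the first variation to zero. Your minor refinements --- writing $\sigma_k$ per time step and noting explicitly that the fixed endpoint $x_N=X_N$ makes the convention $p_{N+1}=0$ inert --- are consistent with, and if anything slightly cleaner than, the paper's bookkeeping.
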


\begin{proof}
Use the discrete maximum principle.
We wish to minimize $\sum^{N-1}_{k=0}g(x_k,u_k)$ subject to
the given discrete dynamics and control constraint set $E$.
To implement the constraints defining $E$, we consider
maximizing the augmented functional (which is 
independent of $p_0$) 
\begin{align*}
&V(p_{0},\ldots, p_N, x_0,\ldots x_{N-1},u_0, \ldots, u_{N-1})  \\
& \qquad :=
\sum^{N-1}_{k=0}\left( \langle
p_{k+1},f(x_k,u_k)-x_{k+1}\rangle+
\left\langle \sigma ,h(u_k)\right\rangle 
-g(x_k,u_k) \right) \\
&\qquad\, = \sum^{N-1}_{k=0}\left( -\langle p_{k+1},x_{k+1}\rangle+
\hat{H}(p_{k+1},x_k,u_k,\sigma) \right) \\
&\qquad\, = \left\langle p_0, x_0 \right\rangle +
\sum^N_{k=0} \left( -\langle 
p_k,x_k\rangle+\hat{H}(p_{k+1},x_k,u_k,\sigma)\right),
\end{align*}
where we set $p_{N+1}=0$ for notational convenience since
there is no $x_{N+1}$.
 The critical points of $V$ are hence given by
\begin{equation*}
0=
\delta V  = \left\langle \delta p_0, x_0 \right\rangle +
\sum^N_{k=0}\left( -\langle p_k,\delta x_k\rangle-\langle\delta
p_k,x_k\rangle+\frac{\partial \hat{H}}{\partial p_{k+1}}\delta p_{k+1} 
+\frac{\partial\hat{H}}{\partial x_k}\delta x_k+
\frac{\partial\hat{H}}{\partial u_k}\delta u_k \right) 
\end{equation*}

since $\delta x_0 = 0$ because $x_0 = X_0$ is a given constant vector.
This gives the extremal equations (\ref{eq8.3}) and (\ref{eq8.6}) since
\begin{align*} 
\frac{\partial\hat{H}}{\partial
p_{k+1}}(p_{k+1},x_k,u_k,\sigma)&= 
\frac{\partial H}{\partial p_{k+1}}(p_{k+1},x_k,u_k),\nonumber\\
\frac{\partial\hat{H}}{\partial x_k}(p_{k+1},x_k,u_k,\sigma)&=
\frac{\partial H}{\partial x_k}(p_{k+1},x_k,u_k),
\end{align*}
and $h(u_k)=0$,  for $k=0, \ldots , N$ and $p_{N+1}=0$.  
\end{proof}

Note that for the algorithm described by equation \eqref{eq8.3}  
to make sense we need to able to compute $p_{k+1}$ from the given implicit form. 
 This follows if the $n \times  n$ matrix with entries
$\frac{\partial ^2 H}{\partial y_i \partial x_j} (y,x,u)$
for $ i,j = 1, \ldots, n,$
is invertible at every 
$(y,x,u) \in \mathbb{R}^n \times  \mathbb{R}^n \times  \mathbb{R}^m$. We need such 
a condition to be satisfied for any given algorithm.

We assume that both $u^*_k$ and $\sigma$  are determined uniquely by equations
(\ref{eq8.6}).
Also note that
$u^*_k=u^*_k(p_{k+1},x_k)$, $\sigma=\sigma (p_{k+1},x_k)$. 
 Using this hypothesis, we have the following consequence.

\begin{corollary}\label{C:8.3}
Assume that the extremal equations (\ref{eq8.3}) hold. Then
\begin{align*}
dp_k&= \frac{\partial^2\hat{H}}{\partial x^2_k}(p_{k+1},x_k,u^*_k)dx_k+
\frac{\partial^2\hat{H}}{\partial p_{k+1}\partial x_k}
(p_{k+1},x_k,u^*_k)dp_{k+1}\,,\\
dx_{k+1}&= \frac{\partial^2\hat{H}}{\partial p_{k+1}\partial x_k}
(p_{k+1},x_k,u^*_k) dx_k+
\frac{\partial^2\hat{H}}{\partial p^2_{k+1}}(p_{k+1},x_k,u^*_k)
dp_{k+1}\,.
\end{align*}
\end{corollary}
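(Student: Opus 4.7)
The plan is to view both of the extremal equations in (\ref{eq8.3}) as relations that express $(p_k, x_{k+1})$ as smooth functions of the independent pair $(p_{k+1}, x_k)$, and then simply to take differentials of each relation. By the hypothesis stated just before the corollary, the optimality conditions (\ref{eq8.6}) determine $u^*_k = u^*_k(p_{k+1}, x_k)$ and $\sigma = \sigma(p_{k+1}, x_k)$; once these are substituted into (\ref{eq8.3}), both $p_k$ and $x_{k+1}$ depend only on $(p_{k+1}, x_k)$.

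The key intermediate step is an envelope-theorem argument. I would introduce the reduced Hamiltonian $S(p_{k+1}, x_k) := \hat{H}(p_{k+1}, x_k, u^*_k(p_{k+1}, x_k), \sigma(p_{k+1}, x_k))$. Since $\hat{H} - H = \langle \sigma, h(u_k)\rangle$ has no explicit $x_k$ or $p_{k+1}$ dependence, the chain-rule contributions through $u^*_k$ and $\sigma$ vanish on the extremal: the term involving $du^*_k$ drops out by the stationarity condition $\partial \hat{H}/\partial u_k = 0$, and the term involving $d\sigma$ drops out because $\partial \hat{H}/\partial \sigma = h(u^*_k) = 0$. Consequently $\partial S/\partial x_k = \partial \hat{H}/\partial x_k = \partial H/\partial x_k$ and analogously for $p_{k+1}$, so (\ref{eq8.3}) can be rewritten as $p_k = \partial S/\partial x_k$ and $x_{k+1} = \partial S/\partial p_{k+1}$.

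Taking differentials of these two rewritten identities in the variables $(p_{k+1}, x_k)$ then immediately produces the displayed formulas, with the symbols $\partial^2 \hat{H}/\partial x_k^2$, $\partial^2 \hat{H}/\partial p_{k+1}\partial x_k$, and $\partial^2 \hat{H}/\partial p_{k+1}^2$ read as the corresponding second partials of the reduced function $S$; the equality of mixed second partials of $S$ is what causes the same off-diagonal block to appear in both rows.

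The main obstacle is interpretative rather than computational: the second partials in the statement have to be understood in this reduced (envelope) sense, since if one naively took second partials of $\hat{H}$ with $u_k$ and $\sigma$ held fixed, extra terms of the form $(\partial^2 \hat{H}/\partial u_k \partial x_k)\,\partial u^*_k/\partial x_k$ would appear, coming from the genuine dependence of $u^*_k$ on $(p_{k+1}, x_k)$ dictated by (\ref{eq8.6}). The first-order envelope identity established in the second paragraph is precisely what justifies the compact notation used in the corollary; nothing beyond smoothness of $u^*_k$ and $\sigma$ in $(p_{k+1}, x_k)$, which follows from the implicit function theorem applied to (\ref{eq8.6}) under the standing uniqueness hypothesis, is needed.
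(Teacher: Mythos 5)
Your argument is correct and is exactly the computation the paper leaves implicit: the corollary is stated without proof, as an immediate consequence of differentiating the extremal equations \eqref{eq8.3} once $u^*_k$ and $\sigma$ are viewed as functions of $(p_{k+1},x_k)$ via \eqref{eq8.6}. Your envelope-theorem clarification --- that the stationarity condition $\partial\hat H/\partial u_k=0$ together with $h(u^*_k)=0$ kills the chain-rule terms at first order, so that the displayed second partials must be read as those of the reduced Hamiltonian $\hat H(p_{k+1},x_k,u^*_k(p_{k+1},x_k),\sigma(p_{k+1},x_k))$ --- is precisely the right reading of the notation and supplies the detail the paper omits.
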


We remark that the implicit advance map
$
\Phi:(x_k,p_k) \mapsto (x_{k+1},p_{k+1})
$
generated by the extremal evolution (\ref{eq8.3}) is symplectic, i.e.,
\begin{equation}\label{eq8.13}
\Phi^*(dx_{k+1}\wedge dp_{k+1})=dx_k\wedge dp_k.
\end{equation}
This is easily demonstrated 
by using Corollary \ref{C:8.3}.
One can also derive symplecticity directly from Hamilton's phase
space principle; see \cite{MaWe2001}.

We note that solving the above two point boundary value in practice in highly nontrivial. Various techniques 
have been employed including so-called shooting methods (see .e.g. \cite{BrHo1969}) and this is still an ongoing topic 
of research.

Now we modify this problem to include an endpoint cost and a form 
suitable for machine learning. 

\noindent
\begin{definition}\label{prob8.1m}
Let $N, M$ be a positive integers and $X_0,X_N^a\in\mathbb{R}^n$,
$a=1, \ldots, M$ be given. Let $f:\mathbb{R}^n\times\mathbb{R}^m
\times  \mathbb{R}^n \rightarrow \mathbb{R}^n$, 
$g:\mathbb{R}^n\times\mathbb{R}^m \rightarrow
\mathbb{R}^+:=[0,\infty)$, and $\phi: \mathbb{R}^n \rightarrow 
\mathbb{R}^+$ be smooth functions. 
Let $E\subset\mathbb{R}^m$ be a control constraint set and
assume that $E = h^{-1}(0)$, where 
$h:\mathbb{R}^m\to\mathbb{R}^l$ is a submersion.

Define the optimal control problem:

\begin{equation}\label{eq8.1m}
\min_{u_k\in E}\sum_{a=1}^M\sum^{N-1}_{k=0}g(x^a_k,u_k) +\sum_{a=1}^M\phi(x^a_N)
\end{equation}

subject to $x^a_{k+1}=f^a(x_k,u_k):=f(x_k,u_k,X_0^a)$ with 
$x_0=X^a_0$ and $x^a_N = X_N^a$, for 
$u_k\in E$, $k=0, \ldots, N-1$, 
and $a=1,\ldots, M$.
\end{definition}

The proof of the next proposition is analogous to that of
Proposition \ref{gendiscprop}.

\begin{proposition}\label{gendiscpropm}
A solution to problem (\ref{eq8.1m}) satisfies the following extremal
solution equations
\begin{equation} \label{eq8.3m}
p^a_k = \frac{\partial H}{\partial x_k}(p^a_{k+1},x_k,u^*_k), 
\qquad
x^a_{k+1} = \frac{\partial H}{\partial p_{k+1}}(p^a_{k+1},x_k,u^*_k),
\end{equation}
where $k=0,\ldots, N-1$, $p^a_{N+1}=0$ for all $a=1,\ldots, M$,
 and
\begin{align} \label{eq8.4m}
H(p_{k+1},x_k,u_k)=\sum_{a=1}^M\left(
\langle p^a_{k+1},f^a(x_k,u_k)\rangle-g(x^a_k,u_k)
-\phi(x^a_N)\right).
\end{align}
In these equations, $u _k ^\ast$ is determined as follows. Let
\begin{equation}\label{eq8.5m}
\hat{H}(p_{k+1},x_k,u_k,\sigma):=
\sum_{a=1}^M\left(
\langle p^a_{k+1},f^a(x_k,u_k)\rangle-g(x^a_k,u_k)\right)+
\left\langle \sigma,h(u_k)\right\rangle
\end{equation}
for $\sigma\in\mathbb{R}^l$, $\sigma$ a Lagrange multiplier.
Then $u^*_k$ and $\sigma$ are solutions of the equations
\begin{equation}\label{eq8.6m}
\frac{\partial\hat{H}}{\partial u_k}(p^a_{k+1},x^a_k,u^*_k,\sigma)=0,\quad
h(u^*_k)=0.
\end{equation}

In addition we have the endpoint condition

\begin{equation}\label{m8}
p^a_N=\frac{\partial\phi(x^a_N)}{\partial x^a_N}, \quad
a=1, \ldots, M.
\end{equation}

\end{proposition}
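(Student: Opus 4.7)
The plan is to mirror the argument for Proposition \ref{gendiscprop}, augmenting the functional to account for the two new features: multiple patterns indexed by $a=1,\ldots,M$ sharing the common control sequence $u_k$, and a terminal cost $\phi(x^a_N)$. Concretely, I would introduce adjoint multipliers $p^a_{k+1}$ for each dynamical constraint $f^a(x_k,u_k)-x^a_{k+1}=0$ and a Lagrange multiplier $\sigma_k$ for each $h(u_k)=0$, and consider
\begin{align*}
V &:= \sum_{a=1}^M\left\langle p^a_0,x^a_0\right\rangle\\
&\quad+\sum_{a=1}^M\sum_{k=0}^{N}\Bigl(-\langle p^a_k,x^a_k\rangle+\langle p^a_{k+1},f^a(x_k,u_k)\rangle-g(x^a_k,u_k)\Bigr)+\sum_{k=0}^{N-1}\langle\sigma_k,h(u_k)\rangle-\sum_{a=1}^M\phi(x^a_N),
\end{align*}
with the convention $p^a_{N+1}=0$, rewriting the bracketed sum through $\hat H$ as in the proof of Proposition \ref{gendiscprop}.

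Next, I would compute $\delta V$ and set it to zero. The variations with respect to $p^a_{k+1}$ and $x^a_k$ for $k=1,\ldots,N-1$ produce the recursion \eqref{eq8.3m} exactly as before, and $\delta x^a_0 = 0$ kills the leading boundary term because $x^a_0=X^a_0$ is fixed. The variation with respect to $\sigma_k$ gives $h(u_k)=0$, while the variation with respect to $u_k$, now collecting contributions from every pattern $a$, yields \eqref{eq8.6m}; here the shared character of $u_k$ across patterns is what forces the sum over $a$ in the definition \eqref{eq8.5m} of $\hat H$.

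The essential new step is the treatment of the terminal index. Unlike in Proposition \ref{gendiscprop}, the endpoint $x^a_N$ is not pinned down (the condition $x^a_N=X^a_N$ in Definition \ref{prob8.1m} should be read as supplying the target to $\phi$, not as a constraint on the dynamics), so $\delta x^a_N$ is free. Collecting the coefficient of $\delta x^a_N$ in $\delta V$ gives
\begin{equation*}
-\,p^a_N-\frac{\partial\phi}{\partial x^a_N}(x^a_N)=0,
\end{equation*}
which is precisely the transversality condition \eqref{m8}. This is the one place the argument genuinely departs from the fixed-endpoint version, and I expect it to be the only subtle step; everything else is a straightforward bookkeeping adaptation of Proposition \ref{gendiscprop}, with the index $a$ summed and $\hat H$ redefined accordingly.
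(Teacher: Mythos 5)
Your argument follows the same route as the paper's: form the augmented functional with adjoint variables $p^a_k$ for the dynamics and a multiplier for $h(u_k)=0$, sum over the patterns $a$ sharing a single control sequence, and obtain \eqref{m8} as a transversality condition from the coefficient of the free variation $\delta x^a_N$. Your reading of Definition \ref{prob8.1m} (that $x^a_N$ is not pinned, so $\delta x^a_N\neq 0$) is exactly what the paper's proof relies on, and your bookkeeping is in fact cleaner: the paper appends $-\sum_a\bigl(p^a_Nx^a_N-\phi(x^a_N)\bigr)$ to the functional, whereas you append only $-\sum_a\phi(x^a_N)$; both are intended to produce the same boundary term in $\delta V$.

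The one point to fix is the sign in your last display. With your $V$ (minus the running and terminal costs plus the constraint terms) and the convention $H=\langle p_{k+1},f\rangle-g$ of \eqref{eq8.4m}, the coefficient of $\delta x^a_N$ is indeed $-p^a_N-\partial\phi/\partial x^a_N$, but setting this to zero gives $p^a_N=-\partial\phi(x^a_N)/\partial x^a_N$, the negative of \eqref{m8}. To land on \eqref{m8} as stated, the terminal cost must enter the augmented functional with the opposite sign from the running cost (which is what the $+\phi(x^a_N)$ inside the paper's boundary term accomplishes), or equivalently one performs a global sign flip of the costate. This is a convention issue rather than a structural gap --- the paper's own proof is not internally consistent about these signs either --- but as written your displayed equation does not match the equation you claim it proves, so you should either adjust how $\phi$ enters $V$ or state the sign convention you are adopting for $p^a$.
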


\begin{proof}
Use the discrete maximum principle.
We wish to minimize $\sum_{a=1}^M\sum^{N-1}_{k=0}g(x^a_k,u_k)+\sum_{a=1}^M\phi(x^a_N)$ subject to
the given discrete dynamics and control constraint set $E$.
To implement the constraints defining $E$, we consider
maximizing the augmented functional (which is 
independent of $p_0$)
\begin{align*}
&V(p_{0},\ldots, p_N, x_0,\ldots x_{N-1},u_0, \ldots, u_{N-1})  \\
& \qquad :=
\sum^{N-1}_{k=0}\sum^{M}_{a=1}\left( \langle
p^a_{k+1},f^a(x^a_k,u_k)-x^a_{k+1}\rangle+
\left\langle \sigma ,h(u_k)\right\rangle 
-g(x^a_k,u_k) \right)   -\sum_{a=1}^M(p^a(N)x^a(N)-\phi(x^a_N))\\
&\qquad\, = \sum_{a-1}^M\sum^{N-1}_{k=0}\left( -\langle p^a_{k+1},x^a_{k+1}\rangle+
\hat{H}(p^a_{k+1},x^a_k,u_k,\sigma) \right)  -\sum_{a=1}^M(p^a(N)x^a(N)-\phi(x^a_N))\\
&\qquad\, =\sum_{a=1}^M\left( \left\langle p^a_0, x^a_0 \right\rangle +
\sum^N_{k=0} ( -\langle 
p^a_k,x^a_k\rangle+\hat{H}(p^a_{k+1},x^a_k,u_k,\sigma)) \right) -\sum_{a=1}^M(p^a(N)x^a(N)-\phi(x^a_N))\\
\end{align*}
where we set $p^a_{N+1}=0$ for notational convenience since
there is no $x_{N+1}$.
 The critical points of $V$ are hence given by
\begin{align*}
0=
\delta V  =& \left\langle \delta p_0, x_0 \right\rangle +
\sum^N_{k=0}\left( \sum_{a=1}^M(-\langle p^a_k,\delta x^a_k\rangle-\langle\delta
p^a_k,x^a_k\rangle+\frac{\partial \hat{H}}{\partial p^a_{k+1}}\delta p^a_{k+1} 
+\frac{\partial\hat{H}}{\partial x^a_k}\delta x^a_k)+
\frac{\partial\hat{H}}{\partial u_k}\delta u_k \right) \\
&+\sum_{a=1}^M\left(\delta p^a(N)x^a(N)+p^a(N)\delta x^a(N)-\frac{\partial\phi(x^a(N)}{\partial x^a(N)}\delta x^a(N)\right),
\end{align*}

since $\delta x_0 = 0$ because $x_0 = X_0$ is a given constant vector.
This gives the extremal equations (\ref{eq8.3m}), (\ref{eq8.6m}) and (\ref{m8}) since
\begin{align*} 
\frac{\partial\hat{H}}{\partial
p_{k+1}}(p_{k+1},x_k,u_k,\sigma)&= 
\frac{\partial H}{\partial p_{k+1}}(p_{k+1},x_k,u_k),\nonumber\\
\frac{\partial\hat{H}}{\partial x_k}(p_{k+1},x_k,u_k,\sigma)&=
\frac{\partial H}{\partial x_k}(p_{k+1},x_k,u_k),
\end{align*}
and $h(u_k)=0$, for $k=0, \ldots , N$ and $p_{N+1}=0$ and $p_N$ is fixed
\end{proof}

\paragraph{Remark} 1. As described in  \cite{BeCeEhOwSc2019}, a 
common choice for $f$ is $f(x,u)=\sigma(Kx+\beta)$, where $u=(K,\beta)$ 
and $sigma$ is the sigmoid function. This is the so-called ResNet 
framework.  We can, of course, consider other problems of this type 
but here we are interested  in a particular 
group theoretic form. 

2. The form  of the solution in Proposition \ref{gendiscpropm}
is very close to that of \cite{LeCun1988} and, at least on a 
superficial level, even more so in the rigid body case. 

\color{black}

\subsection {The discrete symmetric rigid body}

We now derive the discrete symmetric rigid body 
equations by considering discrete optimal control on the special orthogonal group.

\begin{definition}\label{mvoptprobm}
Let $\Lambda$ be a positive definite diagonal matrix. Let
$\overline{Q}_0, \overline{Q}_N\in \operatorname{SO}(n)$ be given and fixed.
Let

\begin{equation}
\hat{V}(U_0, \ldots, U_{N-1}):=\sum_{k=0}^{N-1}
\operatorname{trace}(\Lambda U_{k}),
\quad U_k \in  \operatorname{SO}(n).
\label{discrbopt}
\end{equation}

Define the optimal control problem

\begin{equation}
\mathop{\rm min}_{U_k\in\operatorname{SO}(n)}\hat{V}(U_0,\ldots,  U_{N-1})=
\mathop{\rm min}_{U_k\in\operatorname{SO}(n)}
\sum_{k=0}^{N-1}\operatorname{trace}(\Lambda U_{k})
\label{discrbopt2}
\end{equation}

subject to dynamics and initial and final data
\begin{equation}
Q_{r+1}=Q_kU_{r},
\qquad Q_0=\overline{Q}_0, \qquad Q_N =\overline{Q}_N
\label{discrbeq}
\end{equation}
for $Q_r, U_r\in \operatorname{SO}(n)$, $r=0,1, \ldots, N-1$.
\end{definition}

\begin{theorem}
A solution of the optimal control problem in Definition \ref{mvoptprobm}
satisfies the optimal evolution  equations 
\begin{equation}
Q_{k+1} = Q_kU_{k}\,, \qquad
P_{k+1} = P_kU_{k}\,, \qquad  k=0, \ldots, N-1,
\label{discrbopteqns}
\end{equation}
where $P_k\in \operatorname{SO}(n)$ is the discrete covector in 
the discrete maximum principle and
$U_{k} \in \operatorname{SO}(n)$ is defined by

\begin{equation}
U_{k}\Lambda - \Lambda U^T_{k}=Q_k^TP_k-P_k^TQ_k\,.
\label{Ukdef}
\end{equation}
\end{theorem}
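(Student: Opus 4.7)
The plan is to apply Proposition \ref{gendiscprop} with state $x_k = Q_k$, control $u_k = U_k$, dynamics $f(Q,U) = QU$, running cost $g(Q,U) = \operatorname{trace}(\Lambda U)$, and constraint set $E = \operatorname{SO}(n)$ cut out by $h(U) = U^T U - I$ (taking values in the symmetric matrices, so the corresponding Lagrange multiplier $\Sigma$ will be symmetric). Using the Frobenius pairing $\langle A,B\rangle = \operatorname{trace}(A^T B)$, the Hamiltonian takes the form
\[
H(P_{k+1},Q_k,U_k)=\operatorname{trace}(P_{k+1}^T Q_k U_k) - \operatorname{trace}(\Lambda U_k).
\]

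The first step is to read off the state and adjoint equations. Differentiating $H$ with respect to $P_{k+1}$ immediately yields $Q_{k+1}=Q_kU_k$. For the adjoint equation I compute $\partial H/\partial Q_k = P_{k+1}U_k^T$ using the standard trace identity $\partial\operatorname{trace}(A^T Q B)/\partial Q = A B^T$, so Proposition~\ref{gendiscprop} gives $P_k = P_{k+1}U_k^T$; right-multiplying by $U_k$ and invoking $U_k^T U_k = I$ produces $P_{k+1} = P_k U_k$.

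The main step, and the only place any real work occurs, is to extract the implicit relation (\ref{Ukdef}) from $\partial\hat H/\partial U_k = 0$. I would bypass the explicit Lagrange multiplier by noting that admissible variations of $U_k\in\operatorname{SO}(n)$ take the form $\delta U_k = U_k\xi$ with $\xi\in\mathfrak{so}(n)$; then
\[
\delta H = \operatorname{trace}\bigl((P_{k+1}^T Q_k - \Lambda)\,U_k\xi\bigr),
\]
and the vanishing of this for every antisymmetric $\xi$ is equivalent to the matrix $(P_{k+1}^T Q_k - \Lambda)U_k$ being symmetric. Substituting $P_{k+1}=P_kU_k$, equating this matrix with its transpose, and then conjugating by $U_k$ on the left and $U_k^T$ on the right (using $U_kU_k^T=I$) isolates the antisymmetric combination $Q_k^T P_k - P_k^T Q_k$ on one side and $U_k\Lambda - \Lambda U_k^T$ on the other, producing (\ref{Ukdef}) (with the overall sign pinned down by the conventions of Proposition~\ref{gendiscprop}).

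The main obstacle is purely algebraic: one must sequence the substitution $P_{k+1} = P_k U_k$ and the transposition of the symmetry condition in the right order, so that the diagonal $\Lambda$ pairs against $U_k$ and $U_k^T$ symmetrically while the $P$-$Q$ terms collapse into a commutator-like antisymmetric expression. A subsidiary remark worth making is that the theorem implicitly assumes $P_k \in \operatorname{SO}(n)$; this is consistent with, rather than forced by, the adjoint recursion $P_{k+1} = P_k U_k$, and corresponds to choosing the boundary data for the two-point problem in the symmetric representation of \cite{BlCrMaRa2002}.
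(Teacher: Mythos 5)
Your proposal follows essentially the same route as the paper: same identification of state, control, cost and constraint, the same Hamiltonian $H=\operatorname{trace}\bigl((P_{k+1}^TQ_k-\Lambda)U_k\bigr)$, the same symmetry condition on $(P_{k+1}^TQ_k-\Lambda)U_k$ as the stationarity criterion, and the same adjoint computation $P_k=P_{k+1}U_k^T$. The only methodological difference is cosmetic: you obtain the symmetry condition from admissible variations $\delta U_k=U_k\xi$ with $\xi\in\mathfrak{so}(n)$, whereas the paper introduces an explicit symmetric Lagrange multiplier $\Sigma$ and eliminates it; both yield the identical relation $(P_{k+1}^TQ_k-\Lambda)U_k=U_k^T(Q_k^TP_{k+1}-\Lambda)$.

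One point you should make explicit rather than leave to ``conventions'': carrying out the substitution $P_{k+1}=P_kU_k$ and the conjugation by $U_k$ exactly as you describe lands you on
\begin{equation*}
U_k\Lambda-\Lambda U_k^T=P_k^TQ_k-Q_k^TP_k,
\end{equation*}
which is the \emph{negative} of (\ref{Ukdef}). The paper closes this by replacing $P_k\mapsto -P_k$ (and $P_{k+1}\mapsto -P_{k+1}$), observing that the evolution equations (\ref{discrbopteqns}) are invariant under this reflection of the costate while the momentum relation flips sign to give (\ref{Ukdef}). That substitution, not a pairing convention in Proposition \ref{gendiscprop}, is what pins down the sign, so your sketch needs that one extra sentence to be complete.
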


Equation (\ref{Ukdef}) can be solved for $U_k$ under certain
circumstances, as discussed in \cite{MoVe1991} and \cite{CaLe2001}; we
discuss this issue further below.

\begin{proof}
Applying Proposition \ref{gendiscprop}, we get
\begin{equation}
H(P_{k+1},Q_k,U_{k})=\operatorname{trace}(P_{k+1}^TQ_kU_{k})
-\operatorname{trace}(\Lambda U_{k})
=\operatorname{trace}\left((P^T_{k+1}Q_k-\Lambda)U_{k}\right)\,.
\label{discham}
\end{equation}
Note that 
\[
\hat{V}(U_0, \ldots, U_{N-1})=
\sum_{k=0}^{N-1}\operatorname{trace}(\Lambda U_{k})=
\sum_{k=0}^{N-1}\operatorname{trace}(U^T_{k}
\Lambda)=\sum_{k=0}^{N-1}\operatorname{trace}(Q_k\Lambda Q_{k+1}^T)
\] is the
Moser-Veselov functional \cite{MoVe1991} and that this functional is {\it linear} in 
the controls.

We need to find the critical points of $H(P_{k+1},Q_k,U_{k})$
where $U_k^TU_k=I$ since $U_k\in \operatorname{SO}(n)$.
Thus, we need to minimize a functional of the form
  $\operatorname{trace}(AU)$, $A$ fixed,  subject to $U^TU=I$.
Set
\[
\tilde{V}(U):=\operatorname{trace}(AU)+\tfrac{1}{2}
\operatorname{trace}\left(\Sigma (U^TU-I)\right)\,,
\]
where $U \in  \operatorname{SO}(n)$ and $\Sigma=\Sigma^T$ is 
a $N \times N$ matrix of Lagrange multipliers. Then
$\delta\tilde{V}(U) \cdot \delta U=
\operatorname{trace}(A\delta U+\Sigma U^T\delta U)=0$
implies $A+\Sigma U^T=0$ where $U^TU=I$.
Hence $\Sigma=-AU$. But since $\Sigma=\Sigma^T$ the extrema of our
optimization problem are obtained when
$AU=U^TA^T$. Applying this observation to our case (see \eqref{discham}),  
we have $\nabla_{U_k}H = 0$ when
\[
\left(P_{k+1}^TQ_k-\Lambda\right)U_{k}
=U^T_{k}\left(Q_k^TP_{k+1}-\Lambda\right)\,,\]
that is,
\[
U^T_{k}\Lambda-\Lambda U_{k}=
U^T_{k}Q^T_kP_{k+1}-P^T_{k+1}Q_kU_{k}\]
or, equivalently,
\begin{equation}
U_{k}\Lambda-\Lambda U^T_{k}=-Q^T_kP_{k+1}U^T_{k}+
U_{k}P^T_{k+1}Q_k\,.
\label{symequation}
\end{equation}
Also, by \eqref{eq8.3}, 
\begin{align*}
P_k&=\nabla_{Q_k}H=\left(U_{k}P^T_{k+1}\right)^T=P_{k+1}U^T_{k}\,,
\qquad  
Q_{k+1} = \nabla_{P_{k+1}}H = Q_kU_k\,.
\end{align*}
Hence we obtain equations (\ref{discrbopteqns}).
Combining (\ref{discrbopteqns}) with (\ref{symequation}) we get
\begin{equation}
\label{equ_final_th_2.7}
U_{k}\Lambda-\Lambda U^T_{k}=P_k^TQ_k-Q_k^TP_k
\end{equation}
Now replace $P_k$ by $-P_k$ and $P_{k+1}$ by $-P_{k+1}$; thus
\eqref{discrbopteqns} remains unchanged but \eqref{equ_final_th_2.7}
is transformed to \eqref{Ukdef} which yields the stated result.
\end{proof}

We now define the symmetric representation of the
discrete rigid body equations as follows:
\begin{equation}
Q_{k+1} =Q_kU_{k}, \qquad
P_{k+1} =P_kU_{k}, \qquad   k=0, \ldots, N-1, 
\label{discrbopteqns1}
\end{equation}
where
$U_{k} \in \operatorname{SO}(n)$ is defined by
\begin{equation}
U_{k}\Lambda-\Lambda U^T_{k}=Q_k^TP_k-P_k^TQ_k.
\label{Ukdef1}
\end{equation}
We will write this as
\begin{equation}
J _D U _k = Q_k^TP_k-P_k^TQ_k
\end{equation}
where $J _D : \operatorname{SO}(n) \rightarrow \mathfrak{so}(n)$
(the discrete version of the moment of inertia operator $J$) is 
defined by $J_D U := U \Lambda - \Lambda U ^T$.
Notice that the derivative of $J_D$ at the identity in the direction
$\Omega \in \mathfrak{so}(n)$ is $J:\mathfrak{so}(n)\ni \Omega 
\mapsto \Omega \Lambda + \Lambda \Omega \in  \mathfrak{so}(n)$,
the classical moment of inertia operator on $\mathfrak{so}(n)$.
Since $J$ is invertible, $J_D$ is a diffeomorphism from a neighborhood 
of the identity in $\operatorname{SO}(n)$ to a neighborhood of $0$ 
in $\mathfrak{so}(n)$.
Using these equations, we have the algorithm
$(Q_k,P_k)\mapsto (Q_{k+1}, P_{k+1})$ defined by: compute $U_k$ from
(\ref{Ukdef1}), compute
$Q_{k+1}$ and $P_{k+1}$ using (\ref{discrbopteqns1}). Note that the
update map for
$Q$ and $P$ is done in parallel.

Equation (\ref{Ukdef1}) can be solved for $U_k$ under certain
circumstances, as discussed above, in \cite{MoVe1991}, and in 
\cite{CaLe2001}; we come back later to this issue.

As discussed in \cite{BlCrMaRa2002} these equations are equivalent on a certain 
set to the discrete Moser-Veselov equations
for the classical rigid body if we identify $U$ with the body 
angular momentum.  We shall say more about Moser-Veselov as well as equivalence in the smooth setting below.

We can now  obtain the machine learning generalization of the discrete rigid body
equations.

\begin{definition}\label{mvoptprobm1}
Let $\Lambda$ be a positive definite diagonal matrix. Let
$\overline{Q}_0, \overline{Q}_N^a\in \operatorname{SO}(n)$,
$a=1, \ldots, M$, be given and fixed.
Let
\begin{equation}
\hat{V}(U_0, \ldots, U_{N-1})
:=\sum_{k=1}^{N-1}\operatorname{trace}(\Lambda U_{k})
+\sum_{a=1}^M\phi(\overline{Q}^a_N)
\label{discrboptm}
\end{equation}
and let $\phi:\mathbb{R}^n\times\mathbb{R}^n\rightarrow\mathbb{R}^+$  be a given smooth function. 
Define the optimal control problem
\begin{equation}
\mathop{\rm min}_{U_k\in \operatorname{SO}(n)}\hat{V}(U_0, \ldots, U_{N-1})=
\mathop{\rm min}_{U_k\in \operatorname{SO}(n)}
\sum_{k=1}^{N-1}\operatorname{trace}(\Lambda U_{k})
+\sum_{a=1}^M\phi(\overline{Q}^a_N)
\label{discrbopt2m}
\end{equation}
subject to dynamics and initial data
\begin{equation}
Q^a_{r+1}=Q^a_rU_{r},
\qquad Q^a_0=\overline{Q}^a_0, \qquad r=0, \ldots, N-1,
\label{discrbeqm}
\end{equation}
for $Q_k, U_k\in \operatorname{SO}(n)$.
\end{definition}

\begin{theorem}
A solution of the optimal control problem in Defintion \ref{mvoptprobm1}
satisfies the optimal evolution  equations  for each
$a=1, \ldots, M$,
\begin{equation}
Q^a_{k+1} = Q^a_kU_{k} \qquad
P^a_{k+1} = P^a_kU_{k}\,, \qquad
k=0, \ldots, N-1,
\qquad Q^a_0 = \overline{Q}^a_0,
\label{discrbopteqnsm}
\end{equation}
where $P^a_k$ is the discrete covector in the discrete maximum principle and
$U_{k}$ is defined by
\begin{equation}
U_{k}\Lambda-\Lambda U^T_{k}=\sum_{^Ma=1}((Q^a_k)^TP^a_k-(P^a_k)^TQ^a_k)\,.
\label{Ukdef2}
\end{equation}
with 
\begin{equation}
P^a_N=\frac{\partial\phi(Q^a_N)}{\partial Q^a_N}\,.
\end{equation}

\end{theorem}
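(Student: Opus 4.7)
The plan is to mirror the proof of the single-initial-condition theorem, but now invoking Proposition \ref{gendiscpropm} in place of Proposition \ref{gendiscprop} so as to handle both the sum over training data $a=1,\dots,M$ and the terminal cost $\phi$. The natural identifications are $x_k^a \leftrightarrow Q_k^a$, $p_k^a \leftrightarrow P_k^a$, $u_k \leftrightarrow U_k$, $f^a(x_k,u_k)\leftrightarrow Q_k^a U_k$, and $h(U_k)=U_k^T U_k - I$ to enforce $U_k\in\operatorname{SO}(n)$. The relevant Hamiltonian is then
\begin{equation*}
H(P_{k+1},Q_k,U_k)=\sum_{a=1}^M\operatorname{trace}(P_{k+1}^{a,T}Q_k^a U_k)-\operatorname{trace}(\Lambda U_k),
\end{equation*}
which is linear in $U_k$ with coefficient matrix $A:=\sum_a P_{k+1}^{a,T}Q_k^a-\Lambda$.

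First I would read off the state and costate equations from (\ref{eq8.3m}). Standard matrix calculus gives $\partial H/\partial P_{k+1}^a = Q_k^a U_k$ and $\partial H/\partial Q_k^a = P_{k+1}^a U_k^T$, yielding $Q_{k+1}^a=Q_k^a U_k$ and $P_k^a = P_{k+1}^a U_k^T$; right-multiplying the latter by $U_k$ and using $U_k\in\operatorname{SO}(n)$ delivers $P_{k+1}^a = P_k^a U_k$, which is the second equation of (\ref{discrbopteqnsm}).

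Next I would carry out the optimality computation in $U_k$. Because $H$ has the form $\operatorname{trace}(A U_k)$, exactly the Lagrange-multiplier argument already performed for the single-pattern case applies verbatim, the only difference being that $A$ now carries a sum over $a$. The stationarity condition $AU_k=U_k^T A^T$ reads
\begin{equation*}
\Bigl(\sum_{a=1}^M P_{k+1}^{a,T}Q_k^a-\Lambda\Bigr)U_k = U_k^T\Bigl(\sum_{a=1}^M Q_k^{a,T}P_{k+1}^a-\Lambda\Bigr).
\end{equation*}
Rearranging and then substituting the costate relations $P_{k+1}^a U_k^T = P_k^a$ and $U_k P_{k+1}^{a,T}=P_k^{a,T}$ produces $U_k\Lambda-\Lambda U_k^T = \sum_a(P_k^{a,T}Q_k^a - Q_k^{a,T}P_k^a)$; a final sign flip $P^a\mapsto -P^a$ (which leaves the evolution equations unchanged) gives (\ref{Ukdef2}).

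The terminal condition $P_N^a=\partial\phi(Q_N^a)/\partial Q_N^a$ is immediate from (\ref{m8}) in Proposition \ref{gendiscpropm}, since in Definition \ref{mvoptprobm1} the endpoint $Q_N^a$ is free (unlike in Definition \ref{mvoptprobm}) and so $\delta Q_N^a$ is an unconstrained variation whose coefficient must vanish. The only nontrivial step in this whole argument is the Lagrange-multiplier computation on $\operatorname{SO}(n)$, but it is a direct transcription of the single-pattern calculation with a summed matrix $A$; the presence of $M$ patterns adds no new analytic difficulty because the running cost $\operatorname{trace}(\Lambda U_k)$ is $a$-independent and the endpoint cost only intervenes via the boundary term.
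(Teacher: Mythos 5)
Your proposal is correct and follows essentially the same route as the paper: the paper's own proof simply applies Proposition \ref{gendiscpropm} with the Hamiltonian $H(P_{k+1},Q_k,U_k)=\sum_{a=1}^M\operatorname{trace}((P^a_{k+1})^TQ^a_kU_k)-\operatorname{trace}(\Lambda U_k)$ (plus the $U_k$-independent endpoint term) and then states that ``the computation is as above for the optimal control setting,'' i.e.\ exactly the single-pattern Lagrange-multiplier argument with $A=\sum_a (P^a_{k+1})^TQ^a_k-\Lambda$ that you carry out explicitly. You have merely written out the details the paper elides, including the correct handling of the sign flip $P^a\mapsto -P^a$ and the free-endpoint transversality condition from (\ref{m8}).
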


\begin{proof}
We apply Proposition \ref{gendiscpropm} with 
\begin{align}
H(P_{k+1},Q_k,U_{k})&=
\sum_{a=1}^M\operatorname{trace}((P^a_{k+1})^TQ^a_kU_{k})+\sum_{a=1}^M\phi(Q^a_N)
-\operatorname{trace}(\Lambda U_{k}) 
\label{dischamm}
\end{align}

Then the computation is as above for the optimal control setting. 

\end{proof}

\subsection{Classical Moser-Veselov equations}

The dynamics above  are related to the 
 Moser-Veselov equations as discussed in \cite{BlCrMaRa2002}.
 
The Moser-Veselov equations for the discrete rigid body go back to  
\cite{Veselov1988} and \cite{MoVe1991}. Their work is closely related to the
development of variational integrators; see, e.g.,  \cite{MaPeSh1999} and
\cite{KaMaOrWe2000}. Another approach
to integrating differential equations on manifolds is discussed
in \cite{CrGr1993}. See also \cite{IsMcZa1999},
\cite{BuIs1999} and \cite{BoSu1999}. 

\paragraph{Review of the Moser-Veselov Discrete Rigid Body.} We briefly
review the \cite{MoVe1991} discrete rigid body equations. Let
$Q_k \in\operatorname{SO}(n)$ denote the rigid body configuration at 
time $k$, let $\Omega_k\in\operatorname{SO}(n)$ denote the body angular 
velocity  at time $k$, and let $M_k$ denote the body angular momentum at 
time $k$. These quantities are related by the Moser-Veselov equations
\begin{align}
\Omega_k&= Q_k^TQ_{k-1} \label{mv1}\\
M_k&= \Omega_k^T\Lambda-\Lambda\Omega_k   \label{mv2}\\
M_{k+1}&=\Omega_kM_k\Omega_k^T.\label{mv3}
\end{align}
These equations may be viewed as
defining two different algorithms.
\paragraph{MV-Algorithm 1.} Define the step ahead map
\begin{equation}
\left(Q_k, Q_{k+1}\right)\mapsto
\left(Q_{k+1}, Q_{k+2}\right)
\end{equation}
as follows: compute $\Omega_{k+1}$ from (\ref{mv1}), compute
$M_{k+1}$ from (\ref{mv2}), compute $M_{k+2}$ from (\ref{mv3}),
compute $\Omega_{k+2}$ from (\ref{mv2}) and then compute
$Q_{k+2}$ from (\ref{mv1}).

\paragraph{Remark.} Given $M _k$, conditions under which equation
(\ref{mv2}) is solvable for $\Omega_k$ are discussed in \cite{MoVe1991}
and  \cite{CaLe2001}. 

\paragraph{MV-Algorithm 2.} Define the map:
\begin{equation}
\left(Q_k, M_{k}\right)\mapsto
\left(Q_{k+1}, M_{k+1}\right)
\end{equation}
as follows: compute $\Omega_k$ from (\ref{mv2}),  compute
$M_{k+1}$ from (\ref{mv3}), compute $\Omega_{k+1}$ from (\ref{mv2})
and compute $Q_{k+1}$ from (\ref{mv1}).

\paragraph{Discrete Variational Principle.} The Moser-Veselov
equations (\ref{mv1})-(\ref{mv3}) can be obtained by a discrete
variational principle, as was done in \cite{MoVe1991}. This
variational principle has the general form described in
discrete mechanics; see, e.g., \cite{MaWe1997}, \cite{BoSu1999}, 
and \cite{MaWe2001}.
Namely, stationary points of the functional
\begin{equation}
\hat{S}=  \sum_k \operatorname{trace}(Q_k \Lambda Q_{k+1}^T)
\label{mvl}
\end{equation}
on sequences of orthogonal $n\times n$ matrices yield the Moser-Veselov
equations. This variational approach can be justified as in 
\cite{MaPeSh1999}.


As mentioned above we can prove that symmetric representation of the rigid body equations and the Moser-Veselov equations 
are equivalent when restricted to a suitable set. 
It is easy to see the following: suppose that we have a solution $(Q _k, P _k) $ to the symmetric discrete rigid body equations 
We can then  produce a solution $(Q _{k + 1}, M
_{k + 1}) $ of the Moser-Veselov equations: 
\begin{equation}
M_{k + 1}=Q_k^TP_k-P_k^TQ_k
\label{Mdef1}
\end{equation}
will give us the required $M _{k + 1}$ that does the job. 
 We refer to \cite{BlCrMaRa2002} for the full proof of equivalence in the discrete setting and we shall discuss below
equivalence of  the symmetric and standard rigid body  in the smooth setting.

\section{Smooth Setting and Double Bracket Equations}  \label{optsec}

These general ideas can also be recast in the smooth setting.  We consider here the rigid body analysis followed by an 
analysis of certain flows on adjoint orbits. 

\subsection{Classical  $n$-dimensional rigid body equations}
Firstly we review the classical rigid body equations 
 in $n$ dimensions for completeness. Further details may be found in \cite{BlCrMaRa2002}.

We use the following
pairing (multiple of the Killing form) on $\mathfrak{so}(n)$, the Lie
algebra of $n \times n $ real skew matrices regarded as the Lie algebra
of the $n$-dimensional proper rotation group $\operatorname{SO}(n)$:
\begin{equation}\label{killing.eqn}
            \left\langle  \xi, \eta
            \right\rangle
=  - \frac{1}{2} \operatorname{trace} (\xi \eta).
\end{equation}
The factor of $1/2$ in (\ref{killing.eqn}) is to make this inner product
agree with the usual inner product on $\mathbb{R}^3$ when it is
identified  with $ \mathfrak{so}(3)$ in the following standard way: 
associate the $3 \times 3 $ skew matrix $\hat{u }$ to the vector $u$ by
$\hat{u } \cdot v = u \times v $, where $u \times v $ is the usual
cross product in ${\mathbb R}^3$.

We use this inner product to identify the dual of the Lie algebra,
namely
$\mathfrak{so}(n)^\ast$, with the Lie algebra $\mathfrak{so}(n)$.

We recall from \cite{Manakov1976} and \cite{Ratiu1980} that the left
invariant generalized rigid body equations on
$\operatorname{SO}(n)$ may be written as
\begin{equation}
\dot Q = Q\Omega ; \qquad
\dot M = [M,\Omega]\,, 
\label{rbl}
\end{equation}
where $Q\in \operatorname{SO}(n)$ denotes the configuration space
variable (the attitude of the body), $\Omega=Q^{-1}\dot{Q} \in
\mathfrak{so}(n)$ is the body angular velocity, and
\[
M:=J(\Omega)=\Lambda\Omega +\Omega\Lambda \in
\mathfrak{so}(n)
\]
            is the body angular momentum. Here
$J: \mathfrak{so}(n) \rightarrow  \mathfrak{so}(n) $ is the symmetric
(with respect to the inner product (\ref{killing.eqn})), positive definite,
and hence invertible, operator defined by
\[
J(\Omega)=\Lambda\Omega +\Omega\Lambda ,
\]
            where $\Lambda$ is
a diagonal matrix satisfying $\Lambda_i + \Lambda_j >0$ for
all $i \neq j$. For $n=3$ the elements of $\Lambda_i$
are related to the standard diagonal moment of inertia tensor $I$ by
$I_1 = \Lambda_2 + \Lambda_3$,  $I_2 = \Lambda_3 + \Lambda_1$,
            $I_3 = \Lambda_1 + \Lambda_2$.

The equations $ \dot{ M } =  [ M, \Omega
] $ are readily checked to be the Euler-Poincar\'e equations on
$\mathfrak{so}(n)$ for the Lagrangian
$
l ( \Omega ) = \frac{1}{2}  \left\langle  \Omega , J
( \Omega )
\right\rangle .
$
This corresponds to the Lagrangian on $T \operatorname{SO}(n) $ given by
\begin{equation} \label{RBLag_group.eqn}
L ( g , \dot{g}) = \frac{1}{2} \left\langle g ^{-1} \dot{g}, J ( g ^{-1}
\dot{g} ) \right\rangle\,.
\end{equation}

We note  that the dynamic rigid body 
equations on $\operatorname{SO}(n)$  and indeed on any semisimple Lie 
group are integrable (\cite{MiFo1976}). A key observation in this 
regard, due to Manakov, was that one could write the  generalized 
rigid body equations as Lax equations with parameter: 
\begin{equation} \frac{d}{dt}(M+\lambda \Lambda^2)= [M+\lambda 
\Lambda^2,\Omega+\lambda \Lambda], \label{lambda_eqn} \end{equation} 
where $
M=J(\Omega)=\Lambda\Omega +\Omega \Lambda
$, as in \S2.
The nontrivial coefficients of $\lambda$ in the
traces of the powers of $M+\lambda \Lambda^2$ then yield
the right number of independent integrals in
involution to prove integrability of the flow on a
generic adjoint orbit of $\operatorname{SO}(n)$ (identified with the
corresponding coadjoint orbit). Useful references are
\cite{Bogayavlenski1994} and \cite{FeKo1995}.)
\cite{MoVe1991} show that there is a
corresponding formulation of the discrete rigid body equations
with parameter.

\subsection{Smooth optimal control and the symmetric rigid body equations}
Now we briefly review, see  \cite{BlCr1996} and \cite{BlBrCr1997}, two
results which link  the theory of optimal control with the rigid body equations. 
\begin{definition}\label{rboptcontprob}

            Let $T >0 $, $Q _0, Q _T \in \operatorname{SO}(n)$
be given and fixed. 
Let the rigid body optimal control problem be given by
\begin{equation}
\mathop{\rm min}_{U\in
\mathfrak{so}(n)} \frac{1}{4}\int_0^T
\langle U,J(U)\rangle dt
\label{optr}
\end{equation}
subject to the constraint on $U$ that there be a curve
$Q (t) \in \operatorname{SO}(n)$ such that
\begin{equation}
\dot Q=QU\qquad Q(0)=Q_0,\qquad Q(T)=Q_T.
\label{eqnr}
\end{equation}

\end{definition}

\begin{proposition} The rigid body optimal control problem
has optimal evolution equations

\begin{equation}\label{srb1}
\dot{Q}=QU\qquad \dot{P}=PU
\end{equation}

 where $P$ is the costate vector given by the maximum
principle.

The optimal controls in this case are given by
\begin{equation}
U=J^{-1}(Q^TP-P^TQ).
\end{equation}
\end{proposition}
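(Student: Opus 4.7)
The plan is to derive the stated evolution and optimal control by applying Pontryagin's Maximum Principle to the problem in Definition \ref{rboptcontprob}. This is the continuous analogue of the discrete derivation carried out in Proposition \ref{gendiscprop} and in the symmetric discrete rigid body theorem above, with finite differences and discrete adjoints replaced by time derivatives and Hamilton's equations.

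I would start by viewing $Q(t)$ as an $n \times n$ matrix constrained to lie in $\operatorname{SO}(n)$ and introducing a costate $P(t) \in M_n$, paired with tangent vectors at $Q$ via the ambient Frobenius pairing. The PMP Hamiltonian for the problem is then
\begin{equation*}
H(Q,P,U) \;=\; \operatorname{trace}(P^{T}QU) \;-\; \tfrac{1}{4}\langle U, J(U)\rangle,
\end{equation*}
with $U$ ranging freely over $\mathfrak{so}(n)$. Writing out Hamilton's canonical equations, $\partial H/\partial P$ reproduces the state equation $\dot Q = QU$, while $\partial H/\partial Q$ yields $\dot P = -P U^{T}$; but $U \in \mathfrak{so}(n)$ gives $U^{T} = -U$, so this collapses to $\dot P = PU$. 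This is the structural observation that makes the symmetric representation so attractive: state and costate are both right-translated by the same control, exactly as in the discrete equations \eqref{discrbopteqns1}.

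The third step is to extract the formula for $U$ from the stationarity condition $\partial H/\partial U = 0$ restricted to the subspace $\mathfrak{so}(n)$. Because admissible variations $\delta U$ are skew, only the antisymmetric part of $Q^{T}P$ contributes to the variation of $\operatorname{trace}(P^{T}Q U)$, while the symmetry of $J$ with respect to the inner product \eqref{killing.eqn} gives $\delta\langle U,J(U)\rangle = 2\langle J(U),\delta U\rangle$. Equating the two variations and absorbing the overall constant into the normalisation of the costate yields
\begin{equation*}
J(U) \;=\; Q^{T}P - P^{T}Q,
\end{equation*}
which can be solved as $U = J^{-1}(Q^{T}P - P^{T}Q)$ since $J$ is invertible under the standing hypothesis $\Lambda_{i}+\Lambda_{j}>0$. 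Substituting this $U$ back into $\dot Q = QU$ and $\dot P = PU$ gives the optimal evolution equations \eqref{srb1}.

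The only real obstacle I anticipate is the careful handling of the constraint $U \in \mathfrak{so}(n)$ in the stationarity condition: one must orthogonally project the unconstrained $U$-gradient onto the skew subspace, which is precisely what converts the raw matrix $Q^{T}P$ into its skew part $\tfrac{1}{2}(Q^{T}P - P^{T}Q)$. Keeping track of this projection, together with the factor $\tfrac{1}{4}$ in the running cost and the sign conventions inherited from the pairing $\langle\cdot,\cdot\rangle = -\tfrac{1}{2}\operatorname{tr}(\cdot\,\cdot)$, is the sole bookkeeping needed; nothing deeper than PMP plus linear algebra on $\mathfrak{so}(n)$ intervenes.
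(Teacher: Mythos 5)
Your proposal is correct and follows essentially the same route as the paper, which (citing Bloch--Crouch [1996]) simply writes the Pontryagin Hamiltonian $H=\langle P,QU\rangle+\tfrac14\langle U,J(U)\rangle$, reads off $\dot Q=\partial H/\partial P$ and $\dot P=-\partial H/\partial Q$ (using skewness of $U$ to get $\dot P=PU$), and maximizes over $U\in\mathfrak{so}(n)$ to obtain $U=J^{-1}(Q^TP-P^TQ)$. The only differences are cosmetic sign/normalization conventions in the Hamiltonian and the factor you absorb into the costate, both of which are harmless since $P$ enters linearly and its evolution is homogeneous.
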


\paragraph{Remark.}
The proof (see \cite{BlCr1996}) simply involves, as in the discrete analysis above,
writing the Hamiltonian of the maximum principle as
\begin{equation}
H= \left\langle P,QU \right\rangle +\frac{1}{4} \left\langle
U,J(U)
\right\rangle,
\end{equation}
where the costate vector $P$ is a multiplier enforcing the
dynamics, and then maximizing with respect to $U$ in the standard
fashion (see, for example, Brockett [1973]). 

 We refer to the equations (\ref{srb1}) as the {\it symmetric representation of the rigid body 
 equations}. We can now recover the classical rigid body equations:

\begin{proposition}\label{SRBtoRB.prop}
If $(Q, P)$ is a solution of (\ref{srb1}), then $(Q, M) $ where
$M = J (\Omega)$, $\Omega=U$, and $\Omega = Q ^{-1}  \dot{Q}$ satisfies
             the rigid body equations (\ref{rbl}).
\end{proposition}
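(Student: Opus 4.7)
The plan is to reduce everything to a direct computation, using the fact that the costate $P$ gives an explicit formula for the body angular momentum. Since $\Omega = U$ and $J(U) = Q^TP - P^TQ$ by the optimality condition, we immediately obtain
\[
M = J(\Omega) = Q^TP - P^TQ.
\]
Thus $M$ is expressed as an explicit function of $(Q,P)$, and the task is simply to differentiate this expression along the flow (\ref{srb1}) and show it matches $[M,\Omega]$.

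First I would verify the configuration equation: since $\Omega = U$, the relation $\dot Q = QU$ from (\ref{srb1}) is literally $\dot Q = Q\Omega$, which is the first equation of (\ref{rbl}). Next I would compute $\dot M$ by the product rule, substituting $\dot Q = QU$ and $\dot P = PU$:
\[
\dot M = \dot Q^T P + Q^T\dot P - \dot P^T Q - P^T\dot Q
      = U^TQ^TP + Q^TPU - U^TP^TQ - P^TQU.
\]
Grouping the $U$ and $U^T$ terms separately gives
\[
\dot M = (Q^TP - P^TQ)U + U^T(Q^TP - P^TQ) = MU + U^TM.
\]

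The final step uses the skew-symmetry $U^T = -U$, which holds because $U \in \mathfrak{so}(n)$ by construction (it lies in the image of $J^{-1}$ applied to the skew matrix $Q^TP - P^TQ$). This converts the previous line into
\[
\dot M = MU - UM = [M,U] = [M,\Omega],
\]
which is the second rigid body equation in (\ref{rbl}).

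There is no real obstacle here; the computation is entirely algebraic once one recognizes that the optimality condition $JU = Q^TP - P^TQ$ is precisely the definition of $M$ in terms of $(Q,P)$. The only subtlety worth flagging is the use of $U^T = -U$, which is essential for combining the two middle terms into a commutator; without skew-symmetry one would be left with an anticommutator-like expression rather than $[M,\Omega]$.
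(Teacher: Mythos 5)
Your proof is correct and follows exactly the route the paper takes: the paper's proof is the one-line remark that differentiating $M=Q^TP-P^TQ$ along the flow $\dot Q = QU$, $\dot P = PU$ yields $\dot M = [M,\Omega]$, and you have simply written out that computation in full, correctly identifying the skew-symmetry $U^T=-U$ as the step that turns $MU+U^TM$ into the commutator.
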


\begin{proof} Differentiating  $M=Q^TP-P^TQ$ and using the
equations (\ref{srb1}) gives the second of the equations
(\ref{rbl}).
\end{proof}

While in general there are no
constraints on the costate vector $P\in\mathfrak{gl}(n)$
one can consider the restriction of the extremal flows to 
invariant submanifolds. This limits possible extremal 
trajectories that can be recovered. For example
this system restricts to a system on $\operatorname{SO}(n)\times
\operatorname{SO}(n)$. One can make other assumptions on
the costate vector. For example, suppose we assume a costate
vector $B$ such that $Q^TB$ is skew. Then it is easy to check
that that the extremal evolution equations become
\begin{equation}
\dot Q = QJ^{-1}(Q^TB); \qquad
\dot B = BJ^{-1}(Q^TB)\,,
\label{rbnlms}
\end{equation}
and that these equations restrict to an invariant submanifold defined
by the condition that $Q^TB$ is skew symmetric.
These are the McLachlan-Scovel equations (\cite{McSc1995}).
\medskip

We can now generalize to the machine learning setting: 

\begin{definition}\label{rboptcontprobm}
            Let $T >0 $, $Q _0, Q _T \in \operatorname{SO}(n)$
be given and fixed. 
Let the rigid body optimal control problem be given by
\begin{equation}
\mathop{\rm min}_{U\in
\mathfrak{so}(n)} \frac{1}{4}\int_0^T
\langle U,J(U)\rangle dt +\sum_{a=1}^M\phi(Q^a_T)
\label{optrm}
\end{equation}
subject to the constraint on $U$ that there be a curve
$Q (t) \in \operatorname{SO}(n)$ such that
\begin{equation}
\dot Q^a=Q^aU\qquad Q^a(0)=Q^a_0,\, a=1\dots M.
\label{eqnrm}
\end{equation}

\end{definition}

\begin{proposition}  The smooth machine learning symmetric rigid body flow
has optimal evolution equations

\begin{equation}
\dot Q^a=Q^aU,\, \dot P^a=P^aU
\end{equation}

where $P$ is the costate vector given by the maximum
principle

The optimal controls in this case are given by
\begin{equation}
U=\sum_aJ^{-1}((Q^a)^TP^a-(P^a)^TQ^a).
\end{equation}
\end{proposition}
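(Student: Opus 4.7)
The plan is to follow the proof of the single-pattern smooth rigid body optimal control proposition, applying the Pontryagin maximum principle to the augmented multi-trajectory system. The new ingredients relative to that earlier proposition are precisely those that appear when passing from Proposition \ref{gendiscprop} to Proposition \ref{gendiscpropm} in the discrete setting: $M$ parallel state curves $Q^1,\ldots,Q^M$ coupled through a single shared control $U$, together with a terminal cost in place of a fixed endpoint condition.

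First I introduce one costate $P^a$ for each dynamic constraint $\dot Q^a = Q^a U$ and form the control Hamiltonian
$$
H \;=\; \sum_{a=1}^M \langle P^a, Q^a U\rangle \;-\; \tfrac{1}{4}\langle U, J(U)\rangle,
$$
i.e.\ exactly the Hamiltonian appearing in the remark following the single-pattern proposition, summed over $a$. Hamilton's equations immediately yield the state equations $\dot Q^a = Q^a U$, while the costate equations, using $U^T = -U$ and the same one-line computation as in the single-pattern case (the variation of $\langle P^a, Q^a U\rangle$ in $Q^a$ produces $P^a U^T = -P^a U$), reduce to the symmetric form $\dot P^a = P^a U$. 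Because the state and costate equations for each index $a$ are coupled only through the shared control $U$, the symmetric structure $(\dot Q, \dot P) = (QU, PU)$ is preserved trajectory by trajectory.

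Second, I carry out the optimization of $H$ in $U \in \mathfrak{so}(n)$. Since $J$ is symmetric and positive-definite on $\mathfrak{so}(n)$ and the pairing terms are linear in $U$, the stationarity condition, after projecting the Euclidean gradient onto the skew part, reads
$$
\tfrac{1}{2} J(U) \;=\; \tfrac{1}{2}\sum_{a=1}^M\bigl((Q^a)^T P^a - (P^a)^T Q^a\bigr),
$$
so that $U = \sum_a J^{-1}\bigl((Q^a)^T P^a - (P^a)^T Q^a\bigr)$, up to the global sign replacement $P^a \mapsto -P^a$ used at the end of the discrete symmetric rigid body theorem to cast the formula in the stated form. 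Finally, since the endpoint condition on $Q^a(T)$ has been replaced by the terminal cost $\phi(Q^a_T)$, the free-endpoint transversality of the maximum principle supplies $P^a(T) = \partial \phi(Q^a(T))/\partial Q^a(T)$ for each $a=1,\ldots,M$; this is the continuous counterpart of condition \eqref{m8}.

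No step is really an obstacle: the argument is a routine amalgamation of the single-pattern smooth proof and the multi-pattern discrete proof of Proposition \ref{gendiscpropm}. The only genuine bookkeeping subtlety is the skew projection in the $U$-minimization, where the antisymmetrization $(Q^a)^T P^a - (P^a)^T Q^a$ arises precisely because $U$ is constrained to $\mathfrak{so}(n)$, and where the sign conventions must be kept consistent with the choices made in the discrete theorem for the symmetric rigid body equations.
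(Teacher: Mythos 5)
Your proposal is correct and follows essentially the same route the paper intends: the paper gives no separate proof for this proposition, relying on the remark after the single-pattern smooth case (write the Pontryagin Hamiltonian, maximize in $U$) together with the multi-pattern/terminal-cost modifications already worked out in Proposition \ref{gendiscpropm}, which is exactly the amalgamation you carry out. Your handling of the skew projection, the shared control coupling the $M$ trajectories, the sign convention on $P^a$, and the transversality condition $P^a_T=\partial\phi(Q^a_T)/\partial Q^a_T$ all match the paper's conventions.
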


and we have the endpoint conditions

\begin{equation}
P^a_T=\frac{\partial\phi (Q^a_T)}{\partial Q^a_T}
\end{equation}

\subsection{Local equivalence of classical rigid body and
symmetric rigid body equations.}

Above we saw that solutions of the symmetric rigid body equations
can be mapped to solutions of the rigid body system. As in \cite{BlCrMaRa2002} we can
consider the converse question. Thus, suppose we have a solution
$(Q, M) $ of the standard left invariant rigid body equations.
We seek to solve for $P$ in the
expression
\begin{equation} \label{M_Q_P.eqn}
M=Q^TP-P^TQ.
\end{equation}

For the following discussion, it will be convenient to make use of the
operator norm on matrices. Recall  that
this norm is given by
$
\| A \|_{\rm op} = \sup \left\{ \| A x \| \mid \| x \| = 1 \right\}, 
$
where the norms on the right hand side are the usual Euclidean
space norms.

Since elements of $\operatorname{SO}(n) $ have
operator norms bounded by $1$ and since the operator norm
satisfies
$\| A B \| _{\rm op} \leq \| A \| _{\rm op} \| B \| _{\rm op} $,
we see that {\it if $M$ satisfies $M=Q^TP-P^TQ$, then
$\| M \| _{\rm op} \leq 2$.} Therefore,  $\| M \| _{\rm op} \leq 2$
{\it is a necessary condition for solvability of (\ref{M_Q_P.eqn}) for
$P$.}

\begin{definition}\label{CandS.definition}
Let $C$ denote the set of $(Q,P)$ that map to
$M$'s with operator norm equal to 2 and let $S$ denote the set of
$(Q,P)$ that map to $M$'s with operator norm strictly less than 2.
Also denote by $S_M$ the set of points $(Q,M)
\in T^*\operatorname{SO}(n)$ with
$\| M \| _{\rm op} < 2$. For the left invariant system we trivialize
$T^*\operatorname{SO}(n) \cong \operatorname{SO}(n) \times
\mathfrak{so}(n)^\ast$ by means of left translation to the identity and
we identify $\mathfrak{so}(n)^\ast $ with $\mathfrak{so}(n)$ using the
Killing metric (\ref{killing.eqn}).
\end{definition}

Note that $C$ contains pairs $(Q,P)$ with the property that $Q^TP$ is
both skew and orthogonal.

Recall that $\sinh : \mathfrak{so}(n) \rightarrow
\mathfrak{so}(n)$ is defined by
$
\sinh \xi = \left( e ^\xi - e ^{- \xi }  \right) /2 $.
One sees that indeed $\sinh $ takes values in $\mathfrak{so}(n)$
by using, for example, its series expansion:
\[
\sinh \xi = \xi + \frac{1}{3!}\xi ^3 + \frac{1}{5! } \xi ^5 + \ldots.
\]
Recall from calculus that the inverse function $\sinh ^{-1} (u)$ has a
convergent power series expansion for $| u |  < 1 $ that is given by
integrating the power series expansion of the function
$1/ \sqrt{1 + u ^2 }$ term by term. This power series expansion
shows that the map $\sinh : \mathfrak{so}(n) \rightarrow
\mathfrak{so}(n)$ has an inverse on the set $U = \left\{ u \in
\mathfrak{so}(n) \mid \| u \| _{\rm op} < 1 \right\}$. We shall denote
this inverse by $\sinh ^{-1}$, so
$
\sinh ^{-1}: U \rightarrow \mathfrak{so}(n).
$

\begin{proposition} For $\| M \| _{\rm op} < 2 $, the equation(\ref{M_Q_P.eqn})
has the  solution
\begin{equation}\label{Pequ}
P=Q\left( e^{\sinh^{-1}M/2}\right)
\end{equation}
\end{proposition}
\begin{proof} Notice that
$
M=e^{\sinh^{-1}M/2}-e^{-\sinh^{-1}M/2}\,.
$
\end{proof}

\begin{proposition} The sets $C$ and $S$ are invariant under the double
rigid body equations.
\end{proposition}

\begin{proof}Notice that the operator norm is invariant under
conjugation; that is, for $Q \in \operatorname{SO}(n)$ and
$M \in \mathfrak{so}(n)$, we have
$
\| Q M Q ^{-1} \| _{\rm op} = \| M \| _{\rm op}.
$
This is readily checked from the definition of the operator norm.
Recall that under the identification of the dual
$\mathfrak{so}(n)^\ast$ with the space $\mathfrak{so}(n)$, the
coadjoint action agrees with conjugation. Thus, the map
$f: \mathfrak{so}(n) \rightarrow \mathbb{R}$; $M
\mapsto \| M \|_{\rm op}$ is a Casimir function and so is invariant
under the dynamics. In particular, its level sets are invariant and
so the sets $S$ and $C$ are invariant. \end{proof}
\medskip

\paragraph{The Hamiltonian form of the symmetric rigid body equations.}
Recall that the classical rigid body equations are Hamiltonian
on $T^*\operatorname{SO}(n)$ with respect to the canonical symplectic
structure on the cotangent bundle of $\operatorname{SO}(n)$. The
following result gives the corresponding theorem for the symmetric case. The proof 
is given in \cite{BlCrMaRa2002}

\begin{proposition}
Consider the
Hamiltonian system on the symplectic vector space $ \mathfrak{gl}(n)
\times
\mathfrak{gl}(n)$ with the symplectic structure
\begin{equation}
\Omega _{\mathfrak{gl}(n)} (\xi_1, \eta _1, \xi_2, \eta _2 )
= \frac{1}{2} \operatorname{trace} ( \eta _2 ^T \xi _1 -
\eta _1 ^T \xi _2 )
\label{gln_symp}
\end{equation}
where $(\xi_i,\eta_i)\,,i=1,2$ are elements of  $ \mathfrak{gl}(n)
\times \mathfrak{gl}(n)$
and Hamiltonian
\begin{equation}
H ( \xi, \eta ) = - \frac{1}{8} \operatorname{trace}
\left[ \left( J^{-1}(\xi^T \eta -\eta^T \xi ) \right) \left( \xi^T\eta -
\eta ^T \xi \right) \right] .
\label{ourHam}
\end{equation}
The corresponding Hamiltonian system leaves $\operatorname{SO}(n)
\times \operatorname{SO}(n) $ invariant and induces on it, the
flow of the symmetric representation of the rigid body system.
\end{proposition}
Note that the above Hamiltonian is equivalent to the standard rigid body Hamiltonian 
$
H=\frac{1}{4} \left\langle J^{-1}M,M\right\rangle,
$
as in \cite{Ratiu1980}.

\subsection{Optimality on adjoint orbits and  learning}

These general ideas can also be extended to a 
 variational problem on the adjoint 
orbits of compact Lie groups as in \cite{BlCr1996}.

Let $\frak g$ be a complex semisimple Lie algebra, $\frak g_u$ its compact
real form, and $G_u$ the corresponding compact group.
In this case a natural drift free control system on an orbit
of $G_u$ takes the form
\begin{equation}
\dot x=[x,u]
\label{orb}
\end{equation}

We remark that we formulate the problem in this generality for convenience,
but the most useful case to bear in mind is the algebra $\mathfrak{su}(n)$
of skew-Hermitian matrices or the algebra $\mathfrak{so(}n)$ of skew symmetric
matrices (the intersection of the compact and normal real forms of
the the algebra $\mathfrak{sl}(n, \Bbb C)$). Orbits in this case are similarity
orbits under the group action.

We then consider the following generalization
of the functional suggested by Brockett [1994] (we shall return to
Brockett's precise problem shortly):
\begin{equation}
\eta(x,u)=\int_0^{t_f}1/2||u||^2-V(x)dt
\label{var}
\end{equation}

where $||\cdot ||=<\cdot ,\cdot >^{1/2}$ is the
norm induced on $\frak g_u$ by the negative of the
Killing form $\kappa (\cdot, \cdot)$ on $\frak g$ and V is a smooth
function on $\frak g_u$. 
The pairing between
vectors $x$ in $\frak g$ and dual vectors $p$ in $\frak g^*$ may be 
written $<p,x>=-\kappa(x,p)$.

We have
\begin{theorem}
The equations of the maximum principle for the variational problem with 
functional \ref{var} subject to the dynamics \ref{orb} are
\begin{eqnarray}
\dot x&=&[x,[p,x]] \nonumber \\
\dot p&=&[p,[p,x]]-V_x\,.
\label{op}
\end{eqnarray}
\end{theorem}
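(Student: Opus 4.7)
The plan is to apply the Pontryagin maximum principle directly, using the Ad-invariance of the Killing form to convert the Lie-algebraic pairings into the bracket expressions appearing on the right-hand sides.

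First I would form the control Hamiltonian
\[
H(x,p,u) = \langle p, [x,u]\rangle - \tfrac{1}{2}\|u\|^2 + V(x),
\]
where $\langle \cdot,\cdot\rangle = -\kappa(\cdot,\cdot)$ is the invariant inner product identifying $\mathfrak{g}_u$ with $\mathfrak{g}_u^*$. The sign in front of the Lagrangian reflects the convention that the maximum principle maximizes $H$ for a minimization problem. Using the Ad-invariance identity $\langle [a,b],c\rangle = \langle a,[b,c]\rangle$, I rewrite $\langle p,[x,u]\rangle = \langle [p,x], u\rangle$, so that
\[
H = \langle [p,x], u\rangle - \tfrac{1}{2}\|u\|^2 + V(x).
\]
The unconstrained maximization in $u$ then yields the optimal control $u^* = [p,x]$ directly, and substitution gives the reduced Hamiltonian
\[
H^*(x,p) = \tfrac{1}{2}\|[p,x]\|^2 + V(x).
\]

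Next I would compute Hamilton's canonical equations $\dot x = \partial_p H^*$ and $\dot p = -\partial_x H^*$ with the gradients taken using the Killing-form inner product. For the $p$-gradient of $\tfrac12\|[p,x]\|^2$, pairing with $\delta p$ gives $\langle [p,x], [\delta p, x]\rangle$, which by invariance equals $\langle \delta p, [x,[p,x]]\rangle$; hence $\partial_p H^* = [x,[p,x]]$, producing the first equation $\dot x = [x,[p,x]]$. For the $x$-gradient, the same procedure on $\langle [p,x],[p,\delta x]\rangle$ combined with the cyclic property of $\kappa$ yields $\partial_x (\tfrac12\|[p,x]\|^2) = -[p,[p,x]]$, so together with $V_x$ one obtains $\dot p = [p,[p,x]] - V_x$. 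This recovers the stated system.

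Finally I would remark that consistency with the dynamics on the adjoint orbit is automatic: since $u^* = [p,x]$ lies in $\mathfrak{g}_u$, the equation $\dot x = [x, u^*] = [x,[p,x]]$ is indeed of the constrained form $\dot x = [x,u]$, so the extremal trajectory remains on the orbit. The main subtlety will be the careful sign and ordering bookkeeping when repeatedly invoking the Ad-invariance identity $\kappa(X,[Y,Z]) = \kappa(Y,[Z,X]) = \kappa(Z,[X,Y])$ to convert nested brackets into the canonical gradient form; everything else is a direct application of the maximum principle as in the earlier sections.
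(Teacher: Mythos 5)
Your proposal is correct and follows essentially the same route as the paper: form the Pontryagin Hamiltonian $H=\langle p,[x,u]\rangle-\tfrac12\|u\|^2+V(x)$, maximize in $u$ using invariance of the Killing form to get $u^*=[p,x]$, substitute to obtain the reduced Hamiltonian (your $\tfrac12\|[p,x]\|^2+V(x)$ equals the paper's $-\tfrac12\langle x,[p,[p,x]]\rangle+V(x)$ by Ad-invariance), and then read off Hamilton's equations. The only difference is that you make the repeated use of the invariance identity explicit, which the paper leaves implicit.
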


\noindent {\bf Proof.} The Hamiltonian is given by
\begin{equation}
H(x,p,u)=<p,[x,u]>-1/2||u||^2+V(x)\,.
\end{equation}
Hence 
\[{\partial H\over\partial u}=-<[x,p],\cdot >-<u,\cdot >\]
and thus the optimal control is given by
\begin{equation}
u^*=[p,x]
\end{equation} 

Substituting this into $H$ we find the Hamiltonian evaluated
along the optimal trajectory is given by
\begin{equation}
H^*(p,x)=-1/2<x,[p,[p,x]]>+V(x)
\end{equation}
Computing 
\[\dot x=\left({\partial H^*\over \partial p}\right)^T\]
and
\[\dot p=-\left({\partial H^*\over \partial x}\right)^T\]
gives the result.\quad $\blacksquare$

A particularly interesting special case of this problem
is that of Brockett [1994] where we have
\begin{corollary}
The equations of the 
maximum principle for the variational problem \ref{var} 
subject to equations \ref{orb} with $V(x)=-\tfrac{1}{2} \|[x,n]\|^2$ are
\begin{eqnarray}
\dot x&=&[x,[p,x] \nonumber \\
\dot p&=&[p,[p,x]]-[n,[n,x]]\,.
\label{opb}
\end{eqnarray}
\end{corollary}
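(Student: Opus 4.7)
The corollary is a direct specialization of the preceding theorem, so the plan is simply to apply that theorem with $V(x) = -\tfrac{1}{2}\|[x,n]\|^2$ and verify that the gradient term $V_x$ takes the claimed form $[n,[n,x]]$. Once this identification is made, the $\dot x$ equation is inherited verbatim (since $V$ does not appear there), and the $\dot p$ equation follows by substitution.

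The main calculation is therefore to compute $V_x$ at a point $x \in \mathfrak{g}_u$ with respect to the inner product $\langle\cdot,\cdot\rangle = -\kappa(\cdot,\cdot)$ that is used to identify $\mathfrak{g}_u$ with $\mathfrak{g}_u^\ast$. First I would take the directional derivative
\[
dV(x)\cdot \delta x \;=\; -\tfrac{1}{2}\,\tfrac{d}{d\epsilon}\bigg|_{\epsilon=0}\langle [x+\epsilon\,\delta x,n],[x+\epsilon\,\delta x,n]\rangle \;=\; -\langle [x,n],[\delta x,n]\rangle .
\]
Then I would invoke the ad-invariance of the Killing form, $\kappa([a,b],c) = \kappa(a,[b,c])$, applied with $a=\delta x$, $b=n$, $c=[x,n]$, which gives $\kappa([\delta x,n],[x,n]) = \kappa(\delta x,[n,[x,n]])$. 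Translating back through the sign $\langle\cdot,\cdot\rangle = -\kappa(\cdot,\cdot)$ yields
\[
dV(x)\cdot\delta x \;=\; \langle \delta x,\,[n,[n,x]]\rangle,
\]
and hence $V_x = [n,[n,x]]$.

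Substituting into the second equation of the theorem, $\dot p = [p,[p,x]] - V_x$, immediately produces $\dot p = [p,[p,x]] - [n,[n,x]]$, while $\dot x = [x,[p,x]]$ is carried over unchanged. This completes the reduction to the theorem.

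The only nontrivial step is the sign bookkeeping in the gradient computation: the inner product is defined as the \emph{negative} of the Killing form, $V$ itself carries a minus sign, and the Jacobi/ad-invariance identity reshuffles the bracket. I would therefore be careful to perform the ad-invariance step in $\kappa$ and only afterwards convert to $\langle\cdot,\cdot\rangle$, so that the two minus signs (from $V$ and from the inner-product convention) combine with the sign in $\dot p = [p,[p,x]] - V_x$ to yield exactly the $-[n,[n,x]]$ appearing in \eqref{opb}.
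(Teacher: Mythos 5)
Your proposal is correct and follows essentially the same route as the paper: the paper's one-line proof simply rewrites $V(x)=-\tfrac{1}{2}\|[x,n]\|^2$ as $\tfrac{1}{2}\langle x,[n,[n,x]]\rangle$ via the same ad-invariance of the Killing form that you invoke, from which $V_x=[n,[n,x]]$ and the substitution into the theorem are immediate. Your sign bookkeeping (the minus in $V$, the convention $\langle\cdot,\cdot\rangle=-\kappa(\cdot,\cdot)$, and the flip $-[n,[x,n]]=[n,[n,x]]$) checks out.
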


The proof of the corollary follows immediately, setting
$V(x)=\tfrac{1}{2} \left\langle x,[n,[n,x]]\right\rangle$. Note that with this functional the
equations lie naturally on an adjoint orbit. In addition, these
equations are interesting in that the optimal flow may be
related to the integrable Toda lattice equations (see
\cite{Bloch1990}, \cite{BlBrRa1992} and Brockett [1994].)

The smooth machine learning version of this problem considers for smooth functions $\phi^a:\mathbb{R}\times\mathbb{R}\rightarrow\mathbb{R}^+$, $a=1,\dots, M$
\begin{equation}
\eta(x,u)=\tfrac{1}{2} \int_0^{T}\|u\|^2-V(x)dt +\sum_{a=1}^M\phi(x^a_T)
\label{varm}
\end{equation}
with $\dot{x^a}=[x^a,u],\,a=1\dots M$

As before we now have 
\begin{corollary}
The equations of the 
maximum principle for the variational problem 
subject to equations \ref{orb} with $V(x)=-1/2||[x,n]||^2$ are
\begin{eqnarray}
\dot x^a&=&[x^a,[p^a,x^a]] \nonumber \\
\dot p^a&=&[p^a,[p^a,x]]-[n,[n,x^a]]\,.
\label{opbm}
\end{eqnarray}

with 
\begin{equation}
p^a_T=\frac{\partial\phi(x^a_T)}{\partial x^a_T}
\end{equation}

\end{corollary}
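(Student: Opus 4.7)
The plan is to apply the Pontryagin Maximum Principle (PMP) to \eqref{varm} in direct parallel with the preceding theorem and corollary, treating the $M$ patterns as replicas of Brockett's problem augmented by a Bolza terminal cost. First I would form the Pontryagin Hamiltonian
\[
H(x,p,u) = \sum_{a=1}^M\Bigl(\langle p^a,[x^a,u]\rangle - \tfrac{1}{2}\|u\|^2 + V(x^a)\Bigr),
\]
noting that the terminal cost $\sum_a\phi(x^a_T)$ enters not $H$ itself but the transversality condition of the free-endpoint PMP, which immediately yields $p^a_T = \partial\phi(x^a_T)/\partial x^a_T$ as stated.

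Second, I would maximize $H$ pointwise in $u$, using ad-invariance of the Killing form exactly as in the proof of the earlier theorem: $\partial H/\partial u$ produces a Lie-bracket expression, and setting it to zero gives the optimal control as a bracket in $(x^a,p^a)$. Substituting back into $\dot x^a = [x^a, u^\ast]$ reproduces the state equation of \eqref{opbm}.

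Third, I would substitute $u^\ast$ into $H$ to obtain the reduced Hamiltonian $H^\ast$ and read off the costate equation from $\dot p^a = -\partial H^\ast/\partial x^a$. The bracket contribution is $[p^a,[p^a,x^a]]$ verbatim from the single-data case; the potential $V(x) = -\tfrac{1}{2}\|[x,n]\|^2$, rewritten via ad-invariance of $\langle\cdot,\cdot\rangle$ as $\tfrac{1}{2}\langle x,[n,[n,x]]\rangle$, gives $V_x = [n,[n,x]]$ as in the preceding corollary. Attaching the transversality condition completes \eqref{opbm}.

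The main obstacle is the treatment of the shared control: a literal PMP applied to $\dot x^a = [x^a,u]$ with one common $u$ would produce $u^\ast = \sum_b [p^b,x^b]$, whereas the stated equations have the patternwise form $u^{a\ast} = [p^a,x^a]$. I would therefore interpret the problem patternwise (effectively one control $u^a$ per pattern in the ML network, one per training example), after which the entire derivation is a patternwise duplication of the single-data corollary with the addition of the transversality condition, and no new techniques beyond ad-invariance of the Killing form and the standard Bolza-PMP boundary condition are required.
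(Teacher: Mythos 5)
Your derivation follows exactly the route the paper intends: it gives no separate proof of this corollary beyond the phrase ``as before,'' meaning one repeats the proof of the single-pattern theorem --- form the Pontryagin Hamiltonian, maximize in $u$ using ad-invariance of the Killing form, substitute $u^\ast$ back, and read off Hamilton's equations with $V_x=[n,[n,x]]$ --- and then appends the transversality condition $p^a_T=\partial\phi(x^a_T)/\partial x^a_T$ from the Bolza terminal cost. Your computations (including the identity $-\tfrac{1}{2}\|[x,n]\|^2=\tfrac{1}{2}\langle x,[n,[n,x]]\rangle$ and the resulting gradient) are correct and match the source.

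The one point where you add genuine content is the shared-control issue, and you are right to flag it. As posed in \eqref{varm} there is a single $u$ driving all $M$ copies, so the literal stationarity condition gives $u^\ast=\sum_{b}[p^b,x^b]$; this is precisely the form the paper itself obtains in the smooth machine-learning rigid body proposition, where $U=\sum_a J^{-1}\bigl((Q^a)^TP^a-(P^a)^TQ^a\bigr)$. The patternwise equations \eqref{opbm} as printed are therefore literally correct only under your reinterpretation (one control per training pattern) or for $M=1$; under the shared-control reading the state equation should be $\dot x^a=[x^a,\sum_b[p^b,x^b]]$, with the corresponding sum in the costate equation. Your proof is thus a faithful, indeed more careful, version of the paper's implicit argument, but you should state explicitly which of the two problems you are solving, since the conclusion differs between them.
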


In particular  we would like to consider $\phi(x^a_T)=<x^a_T,n>$.

Then  we can see the final solution tends to  the double bracket equation $\dot{x}=[x,[x,n]]$ and the second term in the costate equation 
goes to zero. 

One can then write a discrete version of these equations using an appropriate discrete algorithm and following our formalism above. 
This will be considered in future work. 

\section{Conclusions} \label{conclusions.section}

In this paper we have discussed discrete optimal control systems 
and related them to equations for machine learning. In particular, 
we have considered the symmetric 
formulation of the rigid body equations, both discrete and smooth, and 
discussed double bracket equations. 

We note also that the  analysis here can be extended to other 
systems, such as the  full Toda dynamics.  We intend to discuss such 
extensions and numerical aspects, as well as the subRiemannian and quantum settings,  in a future publication. 

\paragraph{Acknowledgement.} We acknowledge the inspiration and guidance 
of Roger Brockett over many years and the wonderful 
collaborations we had together.  We would also like to thank Fred Leve for his support for the meeting in which Roger was honored
and for his support for research in nonlinear control in general, and to thank Maani Ghaffari for his valuable comments on the manuscript.

\end{document}